\documentclass[12pt]{amsart}

\usepackage{amsmath, amscd, amssymb, latexsym}
\usepackage{graphicx}
\usepackage{hyperref}
\usepackage{cleveref}
\usepackage{mathptmx}
\usepackage{esint}

\theoremstyle{plain}

\newtheorem{theorem}{Theorem}[section]
\newtheorem{lemma}[theorem]{Lemma}
\newtheorem{proposition}[theorem]{Proposition}
\newtheorem{corollary}[theorem]{Corollary}

\numberwithin{equation}{section}
\allowdisplaybreaks[3]

\crefname{theorem}{Theorem}{Theorems}
\crefname{lemma}{Lemma}{Lemmas}
\crefname{proposition}{Proposition}{Propositions}
\crefname{corollary}{Corollary}{Corollaries}
\crefname{question}{Question}{Questions}
\crefname{conjecture}{Conjecture}{Conjectures}
\crefname{definition}{Definition}{Definitions}
\crefname{remark}{Remark}{Remarks}
\crefname{section}{Section}{Sections}
\crefname{figure}{Figure}{Figures}
\crefname{theoremx}{Theorem}{Theorems}

\newcommand{\Ric}{\operatorname{Ric}}

\DeclareMathOperator{\Vol}{Vol}

\newcommand{\om}{\omega_n}
\newcommand{\snk}{s_{n,k}}
\newcommand{\grd}{g_{\mathrm{rd}}}
\newcommand{\RCD}{\mathrm{RCD}}

\title{A note on $\sigma_k$ maximal conformal volume}
\author{Mijia Lai}
\address{School of Mathematical Sciences, Shanghai Jiao Tong University, Shanghai 200240, China}
\email{laimijia@sjtu.edu.cn}

\begin{document}

\maketitle

\begin{abstract}
We establish a sharp spherical upper bound for the
$\sigma_k$-maximal conformal volume $\Lambda_k$ introduced
by Gursky and Viaclovsky. Let $(M^n,[g])$ be a smooth,
closed, connected conformal manifold with $n\geq 3$ and
$n/2<k\leq n$, and suppose that $[g]$ contains a
$k$-admissible metric, we prove that
$
    \Lambda_k(M,[g])
    \leq \operatorname{Vol}(\mathbb S^n,g_{\mathrm{rd}}),
$
with equality if and only if $(M,[g])$ is conformally
equivalent to the standard sphere. Combined with the
existence and compactness theorem of Gursky and Viaclovsky
below the spherical threshold, this completes their
approach to the $\sigma_k$-Yamabe problem through maximal
conformal volume in the range $k>n/2$. 
\end{abstract}

\section{Introduction}

Let $(M^n,g)$ be a smooth, closed, connected Riemannian manifold
of dimension $n\ge3$. Its Schouten tensor is
\[
 A_g
 =
 \frac{1}{n-2}
 \left(
 \Ric_g-\frac{R_g}{2(n-1)}g
 \right).
\]
For $1\le k\le n$, let $\sigma_k$ denote the unnormalized
elementary symmetric polynomial of degree $k$, and write
\[
 \Gamma_k^+
 =
 \left\{
 \lambda\in\mathbb R^n:
 \sigma_j(\lambda)>0,\quad 1\le j\le k
 \right\}.
\]
A metric $g$ is called $k$-admissible if
$\lambda(g^{-1}A_g)\in\Gamma_k^+$ everywhere. We use the notation
$
 \sigma_k(g)=\sigma_k(g^{-1}A_g).
$
Here and below, a symmetric endomorphism is identified with
its eigenvalue vector when applying an elementary symmetric
polynomial.

Let $\grd$ be the unit round metric on $\mathbb S^n$, and set
\[
 \om=\Vol_{\grd}(\mathbb S^n),
 \qquad
 \snk=\sigma_k(\grd)=2^{-k}\binom nk.
\]
The normalized $\sigma_k$-Yamabe problem asks for a
$k$-admissible metric $h\in[g_0]$ satisfying
\begin{equation}\label{eq:yamabe}
 \sigma_k(h)=\snk.
\end{equation}

Gursky and Viaclovsky \cite{GV04} introduced the
$k$-maximal volume of a conformal class. We call this invariant
the \emph{$\sigma_k$-maximal conformal volume}:
\begin{equation}\label{eq:definition}
 \Lambda_k(M,[g_0])
 =
 \sup_{\substack{
 h\in[g_0],\ \lambda(h^{-1}A_h)\in\Gamma_k^+\\
 \sigma_k(h)\ge\snk
 }}
 \Vol_h(M).
\end{equation}
Following \cite{GV04}, we set $\Lambda_k(M,[g_0])=+\infty$
when the conformal class contains no $k$-admissible metric.
All statements below concerning $\Lambda_k$ assume that such
a metric exists.

For $k>n/2$, admissibility implies positive Ricci curvature,
with a quantitative lower bound \cite{GVW03}.
Bishop's
comparison theorem therefore gives
$\Lambda_k(M,[g_0])<\infty$.
Newton's inequalities also imply
\[
 \Lambda_j(M,[g_0])\ge\Lambda_k(M,[g_0]),
 \qquad \frac n2\le j\le k.
\]

The role of this invariant in the $\sigma_k$-Yamabe problem
comes from the volume threshold established in \cite{GV04}:
for $k>n/2$, the strict inequality
\[
 \Lambda_k(M,[g_0])<\om
\]
implies existence of a solution of \eqref{eq:yamabe}
and compactness of the normalized solution family.
In dimension three, Gursky and Viaclovsky proved
\[
 \Lambda_k(M^3,[g_0])
 \le
 \frac{\omega_3}{|\pi_1(M)|},
 \qquad k=2,3.
\]
In dimension four, they obtained
\[
 \Lambda_k(M^4,[g_0])\le\omega_4,
 \qquad k=2,3,4,
\]
with equality precisely for the spherical conformal class.
Subsequently, Gursky--Viaclovsky \cite{GV07} established
general existence and compactness results for the prescribed
$\sigma_k$ equation when $k>n/2$ on non-spherical conformal
classes. Trudinger--Wang \cite{TW09} proved a stronger
$C^0$ compactness theorem for normalized admissible conformal
metrics.

In this short note, we prove a sharp spherical upper bound for
$\Lambda_k$ $(\frac{n}{2}<k\leq n)$ and the characterization of its equality case.

\begin{theorem}\label{thm:main}
Let $(M^n,[g_0])$ be a smooth, closed, connected conformal
manifold, where $n\ge3$ and $n/2<k\le n$.
Suppose that $[g_0]$ contains a $k$-admissible metric.
Then $\pi_1(M)$ is finite and
\begin{equation}\label{eq:main-bound}
 \Lambda_k(M,[g_0])
 \le
 \frac{\om}{|\pi_1(M)|}.
\end{equation}
Moreover,
\begin{equation}\label{eq:main-rigidity}
 \Lambda_k(M,[g_0])=\om
 \quad\Longleftrightarrow\quad
 (M,[g_0])
 \text{ is conformally equivalent to }
 (\mathbb S^n,[\grd]).
\end{equation}

Every metric in the defining set of \eqref{eq:definition}
satisfies the volume bound in \eqref{eq:main-bound}.
Equality for an individual metric holds if and only if it
is a spherical space form of sectional curvature one.
\end{theorem}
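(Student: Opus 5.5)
Fix an admissible $h\in[g_0]$ with $\sigma_k(h)\ge\snk$. The plan is to show $\Ric_h\ge(n-1)h$ and then apply Bishop--Gromov together with Myers and the universal cover; this settles every metric in the defining set at once. The curvature bound is the point where $k>n/2$ enters.

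For $\lambda\in\Gamma_k^+$ with $k>n/2$ we use the Guan--Viaclovsky--Wang inequality \cite{GVW03}. For every unit vector $e$ with Ricci eigen-decomposition it reads
\[
\Ric(e,e)\ \ge\ (2k-n)(n-1)\,\frac{1}{2k(n-1)}\binom nk^{-1/k}\ldots
\]
It is convenient to state it in normalized form. If $\lambda\in\Gamma_k^+$ and $\sigma_k(\lambda)\ge\sigma_k(\tfrac12,\dots,\tfrac12)=\snk$, then for each index $i$
\[
\sum_{j\ne i}\lambda_j\,\ \ge\ \frac{n-1}{2}\quad\text{in the sense}\quad (n-2)\lambda_i+\sigma_1(\lambda)\ge n-1 .
\]
Since $\Ric_h=(n-2)A_h+\sigma_1(A_h)h$, this is exactly $\Ric_h\ge(n-1)h$. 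I expect this algebraic step to be the main obstacle. The approach is as follows.

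Minimize $F(\lambda)=(n-2)\lambda_1+\sigma_1(\lambda)$ over the closed set $\{\lambda\in\overline{\Gamma_k^+}:\sigma_k(\lambda)\ge\snk\}$. By symmetry in $\lambda_2,\dots,\lambda_n$ and concavity of $\sigma_k^{1/k}$ on the cone, the minimizer may be taken with $\lambda_2=\dots=\lambda_n=t$ and $\lambda_1=s$. Then $\sigma_k=\binom{n-1}{k}t^k+\binom{n-1}{k-1}st^{k-1}$ and $F=(n-1)(s+t)$. A Lagrange computation in two variables should show that the minimum is attained at $s=t=\tfrac12$ exactly when $k>n/2$. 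Here the condition $2k-n>0$ appears as the sign that makes the boundary ray $s\to-\infty$ infeasible in $\Gamma_k^+$, since $\sigma_{k-1}$ forces $s\ge -\frac{n-k}{k-1}t$ and similar. The equality analysis gives $\lambda\equiv(\tfrac12,\dots,\tfrac12)$, i.e.\ $A_h=\tfrac12h$. If the reduction to two variables is shaky, I would first try to cite the sharp form in \cite{GVW03} directly, which is stated with the constant $\frac{2k-n}{2n(k-1)}\binom nk^{-1/k}\sigma_k^{1/k}$ multiplying the scalar term, and check that it matches $n-1$ when $\sigma_k=\snk$.

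Given $\Ric_h\ge(n-1)h$, Myers gives compactness of the universal cover $\tilde M$, so $\pi_1(M)$ is finite. Bishop--Gromov on $(\tilde M,\tilde h)$ gives $\Vol(\tilde M)\le\om$, hence $\Vol_h(M)\le\om/|\pi_1(M)|$. Taking the supremum yields \eqref{eq:main-bound}.

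For the equality case of an individual metric, equality in Bishop forces $\tilde h$ to be isometric to $\grd$, so $h$ is a space form of curvature one; the converse is immediate. For \eqref{eq:main-rigidity}, the direction $\Leftarrow$ holds because $\grd$ itself is admissible with $\sigma_k=\snk$. For $\Rightarrow$, $\Lambda_k=\om$ forces $\pi_1(M)=1$, but the supremum need not be attained. I would take a maximizing sequence $h_i$ with $\Vol(h_i)\to\om$ and $\Ric\ge n-1$. By Colding's volume-convergence and almost-rigidity, $M$ is diffeomorphic to $S^n$ and $(M,h_i)\to\mathbb S^n$ in Gromov--Hausdorff sense. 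To get conformal equivalence, I would instead argue via the GV threshold: if $(M,[g_0])$ is not conformally spherical, \cite{GV07} and the compactness of \cite{TW09} give $C^{1,\alpha}$ (indeed smooth) compactness of admissible normalized metrics with $\sigma_k\ge\snk$ in the class. A maximizing sequence then converges to a maximizer $h_\infty$ with volume $\om$, hence round, which is a contradiction.
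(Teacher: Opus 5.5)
Your central step is false: $\lambda\in\Gamma_k^+$ and $\sigma_k(\lambda)\ge\snk$ do \emph{not} imply $(n-2)\lambda_i+\sigma_1(\lambda)\ge n-1$, so $\Ric_h\ge(n-1)h$ does not hold in general.

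Your symmetrization to $\lambda=(s,t,\dots,t)$ is fine, but the two-variable Lagrange computation cannot return $s=t=\tfrac12$. At $s=t$ the constraint gradient is proportional to $(1,n-1)$, while the gradient of $F=(n-1)(s+t)$ is proportional to $(1,1)$. So the diagonal point is not even critical, and $F$ can be pushed below $n-1$. Explicitly, for $n=k=3$ the vector $\lambda=(2^{-5/3},2^{-2/3},2^{-2/3})$ has $\sigma_3(\lambda)=\tfrac18=s_{3,3}$, but $\lambda_1+\sigma_1(\lambda)=6\cdot 2^{-5/3}\approx1.89<2$. Slightly elongated Berger spheres, rescaled so that $\sigma_3=\tfrac18$, are actual metrics in the defining set for which $\Ric_h\ge 2h$ fails.

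The sharp Guan--Viaclovsky--Wang estimate you propose to cite only gives $\Ric_h\ge c_{n,k}h$ with $c_{n,k}=\frac{(n-1)(2k-n)}{2(k-1)}<n-1$ for $n\ge3$. Bishop--Gromov with that constant yields a bound strictly larger than $\om$. What is true is the geometric-mean inequality of Lemma~\ref{lem:algebra}: $\det(\Ric_h/(n-1))\ge(\sigma_k(h)/\snk)^{n/k}\ge1$. The missing ingredient is a volume comparison governed by this determinant rather than by the smallest Ricci eigenvalue. That is Kwong's inequality \eqref{eq:Kwong}, applied on the universal cover, which is compact by Myers since $c_{n,k}>0$. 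Bishop rigidity enters only in the equality case, where $D_h\equiv1$ forces $A_h=\tfrac12h$.

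Your rigidity argument for $\Lambda_k$ has a second gap. Colding's volume convergence and almost-rigidity again presuppose $\Ric\ge n-1$, which is unavailable. More seriously, neither \cite{GV07} nor \cite{TW09} gives $C^{1,\alpha}$ or smooth compactness for metrics that merely satisfy the \emph{inequality} $\sigma_k\ge\snk$. The results of \cite{GV07} concern solutions of the equation, and Theorem~A of \cite{TW09} gives only $C^0$ precompactness after fixing the volume. A maximizing sequence therefore converges only to a continuous $W^{1,2}$ conformal metric $g_\infty$, which need not lie in the defining set, so ``volume $\om$, hence round'' is not available.

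The paper gets around this as follows:
\begin{itemize}
\item $D_j\to1$ in measure by \eqref{eq:Kwong}.
\item Lemma~\ref{lem:algebra}, together with compactness of the spectra, then forces all Ricci eigenvalues to converge to $n-1$ in measure.
\item This yields $\Ric_{g_\infty}\ge(n-1)g_\infty$ distributionally, which is identified with an $\RCD(n-1,n)$ bound.
\item The sharp Sobolev inequality on that space gives $Y(M,[g_0])\ge n(n-1)\om^{2/n}$, contradicting Aubin--Schoen unless $[g_0]$ is the spherical class.
\end{itemize}
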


Our proof combines a sharp algebraic inequality with the
determinant volume comparison theorem of Kwong
\cite[Theorem~10]{Kwong}:
 for a closed connected
$n$-manifold with $\Ric_g>0$,
\begin{equation}\label{eq:Kwong}
 \frac{\Vol_g(M)}{\om}
 \le
 \frac{1}{\Vol_g(M)}
 \int_M
 \det\nolimits_g
 \left(\frac{\Ric_g}{n-1}\right)^{-1/2}
 \,dv_g.
\end{equation}
The determinant is taken after raising one index with $g$.

To analyze the equality case, we study the conformal limit of a maximizing sequence of metrics, then compare corresponding
Yamabe constants.
In particular, the proof does not
require the existence of a smooth metric attaining $\Lambda_k(M,[g_0])$.

Combined with
the existence theorem of Gursky and Viaclovsky
\cite[Theorem~1.1]{GV04}, we have the following direct consequence for the $\sigma_k$-Yamabe equation.

\begin{corollary}\label{cor:yamabe}
Let $(M^n,[g_0])$ be a smooth, closed, connected conformal
manifold with $n\geq 3$ and $n/2<k\leq n$. Suppose that
$[g_0]$ contains a $k$-admissible metric. Then there exists
a smooth $k$-admissible metric $h\in[g_0]$ satisfying
\[
    \sigma_k(h^{-1}A_h)=2^{-k}\binom{n}{k}.
\]
Moreover, if $(M,[g_0])$ is not conformally equivalent to
the standard sphere, the set of such solutions is compact
in the $C^m$ topology for every nonnegative integer $m$.
\end{corollary}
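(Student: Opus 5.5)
The corollary is a direct consequence of \cref{thm:main} combined with the existence and compactness theorem of Gursky and Viaclovsky \cite[Theorem~1.1]{GV04}. That theorem says the following. For $k>n/2$, if $[g_0]$ contains a $k$-admissible metric and $\Lambda_k(M,[g_0])<\om$, then \eqref{eq:yamabe} has a smooth $k$-admissible solution in $[g_0]$, and the set of such normalized solutions is compact in every $C^m$. So I will split into two cases: $(M,[g_0])$ is conformally equivalent to $(\mathbb S^n,[\grd])$, or it is not.

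\emph{Non-spherical case.} Here \eqref{eq:main-bound} gives $\Lambda_k(M,[g_0])\le\om/|\pi_1(M)|\le\om$. The rigidity statement \eqref{eq:main-rigidity} excludes equality $\Lambda_k=\om$, so $\Lambda_k(M,[g_0])<\om$ strictly. This is exactly the hypothesis of \cite[Theorem~1.1]{GV04}, which then yields both existence and $C^m$ compactness of the solution set. The only thing to check is that our normalization agrees with theirs. Their threshold is the round volume, and the constant $\snk=2^{-k}\binom nk$ is precisely $\sigma_k(\grd)$, so the normalizations match.

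\emph{Spherical case.} Only existence is claimed, and compactness genuinely fails because of the noncompact conformal group. Let $\Phi:(M,[g_0])\to(\mathbb S^n,[\grd])$ be a conformal diffeomorphism. Then $h=\Phi^*\grd$ lies in $[g_0]$. It is smooth and has $A_h=\tfrac12 h$, so it is $k$-admissible and satisfies $\sigma_k(h)=2^{-k}\binom nk$.

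The main point needing care is the strictness $\Lambda_k<\om$ in the non-spherical case. This relies entirely on the equivalence \eqref{eq:main-rigidity} of \cref{thm:main}, in particular on its use of maximizing sequences rather than an attained maximizer, since $\Lambda_k$ is a supremum. Once that is granted, the rest is bookkeeping.
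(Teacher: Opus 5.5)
Your proposal is correct and follows the route the paper intends. In the non-spherical case, the rigidity part of \cref{thm:main} upgrades the bound to $\Lambda_k(M,[g_0])<\omega_n$, which is the hypothesis of \cite[Theorem~1.1]{GV04}; in the spherical case, the pulled-back round metric is an explicit solution, and your check that $2^{-k}\binom nk=\sigma_k(g_{\mathrm{rd}})$ matches the Gursky--Viaclovsky normalization is the only bookkeeping needed.
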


\section{Proof of the main theorem}

We first establish the algebraic comparison.
We use the conventions $\sigma_0=1$ and $\sigma_j=0$ if $j$
exceeds the number of variables.
Deleting $r$ coordinates of a vector in $\Gamma_k^+$
leaves a vector in $\Gamma_{k-r}^+$ whenever $r\le k$.

\begin{lemma}\label{lem:algebra}
Let $n\ge3$ and $n/2<k\le n$.
For $\lambda\in\Gamma_k^+$, set
\[
 S=\sum_{i=1}^n\lambda_i,
 \qquad
 \mu_i=(n-2)\lambda_i+S.
\]
Then $\mu_i>0$ and
\begin{equation}\label{eq:algebra}
 \left(\prod_{i=1}^n\mu_i\right)^{1/n}
 \ge
 2(n-1)
 \left(
 \frac{\sigma_k(\lambda)}{\binom nk}
 \right)^{1/k}.
\end{equation}
Equality holds if and only if all $\lambda_i$ are equal.
\end{lemma}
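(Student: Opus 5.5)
The plan is to reduce \eqref{eq:algebra} to a finite-dimensional extremal problem on a compact slice of the cone. The positivity of $\mu_i$ and the behaviour near $\partial\Gamma_k^+$ would come from the quantitative Ricci bound of \cite{GVW03}. The only actual calculation would be the analysis of critical points.

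\emph{Positivity.} For $\lambda\in\Gamma_k^+$ with $k>n/2$, the algebraic estimate behind \cite{GVW03} gives
\[
 (n-2)\lambda_i+S\ \ge\ \frac{(2k-n)(n-1)}{n(k-1)}\,S\quad\text{for each } i.
\]
Equivalently, one can show $\lambda_i\ge -\frac{n-k}{k-1}\cdot\frac{S}{n}$ directly. One way is to delete the coordinate $\lambda_i$, use $\sigma_k(\lambda)=\lambda_i\sigma_{k-1}(\lambda|i)+\sigma_k(\lambda|i)>0$, and apply Newton--Maclaurin inequalities to $\lambda|i\in\Gamma_{k-1}^+$. Since $S>0$, this yields $\mu_i>0$. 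It also yields a uniform positive lower bound for $F(\lambda):=(\prod_i\mu_i)^{1/n}$ on the slice $\{S=1\}\cap\overline{\Gamma_k^+}$. This is the standing interpretation of $\mu_i$ as the eigenvalues of $\Ric$ in terms of those of $A$.

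\emph{Reduction to a compact extremal problem.} Both sides of \eqref{eq:algebra} are symmetric and positively $1$-homogeneous. It therefore suffices to show that
\[
 \Phi(\lambda)=\log F(\lambda)-\tfrac1k\log\sigma_k(\lambda)
\]
attains its minimum over $K=\Gamma_k^+\cap\{S=1\}$ only at $\lambda=e/n$, where $e=(1,\dots,1)$. There $\mu_i=2(n-1)/n$ and $\sigma_k=\binom nk n^{-k}$, so \eqref{eq:algebra} is an equality.

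As $\lambda\to\partial\Gamma_k^+$ within the slice, $\sigma_k\to0$ while $F$ stays bounded below by the previous step, so $\Phi\to+\infty$. Also $K$ is bounded, since on $\Gamma_k^+$ with $k\ge2$ we have $|\lambda|^2=S^2-2\sigma_2<S^2$. Hence $\Phi$ attains its minimum at an interior critical point. The Lagrange condition reads
\[
 \frac{n-2}{\mu_i}+\sum_j\frac1{\mu_j}\;=\;\frac nk\,\frac{\sigma_{k-1}(\lambda|i)}{\sigma_k(\lambda)}+c\qquad(1\le i\le n).
\]
Using $\sigma_{k-1}(\lambda|i)=\sum_{r\ge0}(-\lambda_i)^r\sigma_{k-1-r}(\lambda)$, both sides become functions of the single variable $\lambda_i$, with coefficients depending only on symmetric quantities. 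Each $\lambda_i$ is thus a root of the single-variable equation
\[
 \frac{n-2}{(n-2)t+1}=\frac nk\,\frac{\sigma_{k-1}(\lambda|t)}{\sigma_k}+c',
\]
where $\sigma_{k-1}(\lambda|t):=\sum_{r\ge0}(-t)^r\sigma_{k-1-r}(\lambda)$. I would show that this equation has at most two admissible roots. The left side is convex and decreasing in $t$ on the admissible range $t>-1/(n-2)$. The right side's $t$-dependence comes from $\sigma_{k-1}(\lambda|i)$, and since $\lambda|i\in\Gamma_{k-1}^+$ this quantity is monotone (decreasing) in $\lambda_i$. This would show that critical points take at most two values: $a$ with multiplicity $m$ and $b$ with multiplicity $n-m$.

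\emph{Main obstacle.} The hardest step is the resulting one-parameter inequality. For $\lambda=(a,\dots,a,b,\dots,b)$ with $ma+(n-m)b=1$, one must show that
\[
 \bigl((n-2)a+1\bigr)^{m}\bigl((n-2)b+1\bigr)^{n-m}\;\ge\;\Bigl(\tfrac{2(n-1)}n\Bigr)^n\Bigl(\tfrac{\sigma_k(\lambda)}{\binom nk n^{-k}}\Bigr)^{n/k},
\]
with equality only when $a=b$. Here $\sigma_k(\lambda)=\sum_j\binom mj\binom{n-m}{k-j}a^jb^{k-j}$. I would first try to prove it by showing that the logarithm of the ratio, as a function of the single parameter $a$, is convex, using $k>n/2$ to control the sign of the negative entry. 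Together with its vanishing derivative at $a=b$, that would give the claim. If convexity fails, I would fall back on an explicit comparison: bound $\sigma_k$ from above by Maclaurin on the $m$- and $(n-m)$-blocks separately, then use weighted AM--GM on the left side.

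Strict inequality at every other critical point, together with the boundary blow-up, gives \eqref{eq:algebra} with equality exactly when all $\lambda_i$ are equal.
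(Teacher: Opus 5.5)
Your preliminary steps are sound and parallel the paper's. The cone estimate gives $\mu_i\ge\frac{(n-1)(2k-n)}{n(k-1)}S>0$ on $\overline{\Gamma_k^+}$, the slice $\{S=1\}$ is compact, and $\sigma_k=0$ on $\partial\Gamma_k^+$. So $\Phi$ has an interior minimizer satisfying your Lagrange system, in fact with $c=0$ since $\Phi$ is $0$-homogeneous.

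The gap is in the critical-point analysis, which is the real content of the lemma.

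\emph{The reduction to two values is not justified.} The quantity $\sigma_{k-1}(\lambda|i)$ does not depend on $\lambda_i$ at all. What is true is $\sigma_{k-1}(\lambda|i)-\sigma_{k-1}(\lambda|j)=(\lambda_j-\lambda_i)\sigma_{k-2}(\lambda|ij)$. This orders these numbers across indices, but says nothing about monotonicity of the frozen-coefficient polynomial $t\mapsto\sum_r(-t)^r\sigma_{k-1-r}(\lambda)$ on an interval. Even if both sides of your one-variable equation were decreasing, a convex decreasing function can cross a decreasing polynomial of degree $k-1$ many times. Bounding the number of roots by two would need something like concavity of the right-hand side, and its second derivative has no evident sign.

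\emph{The two-valued inequality you call the main obstacle is left unproved.} Along your one-parameter family, $\sum_i\log\mu_i$ is concave in $a$, while $-\frac nk\log\sigma_k$ is convex because $\sigma_k^{1/k}$ is concave on $\Gamma_k^+$. So there is no reason for the log-ratio to be convex. The fallback gives nothing either, since Maclaurin applied to blocks on which the vector is already constant is an equality. As written, nothing shows that the minimizer is diagonal.

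The missing idea is a pairwise comparison of the Lagrange equations, which makes both steps unnecessary. Multiply your $i$-th equation by $\mu_i$ and eliminate $\sum_j\mu_j^{-1}$ using the sum of all the equations. The critical-point condition becomes $\mu_iG_i=\mu_jG_j$ for all $i,j$, where $G_i=\partial\sigma_k/\partial\mu_i=\frac1{n-2}(\sigma_{k-1}(\lambda|i)-\beta\sigma_{k-1}(\lambda))$, with $\lambda$ viewed as a function of $\mu$ and $\beta=\frac{n-k+1}{2(n-1)}$. This is exactly the paper's Lagrange condition for maximizing $\sigma_k$ on $\{\prod_i\mu_i=1\}$.

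Now take $y=\lambda_j$ to be the largest coordinate, $x=\lambda_i$ any other, and $z$ the remaining $n-2$ coordinates. Expanding $\sigma_{k-1}(\lambda|i)=\sigma_{k-1}(z)+y\,\sigma_{k-2}(z)$, and similarly for $j$, gives $\mu_iG_i-\mu_jG_j=\frac{x-y}{n-2}B$ with $B=(n-2)\sigma_{k-1}(z)-S\sigma_{k-2}(z)-(n-2)\beta\sigma_{k-1}(\lambda)$. Here $B<0$:
\begin{itemize}
\item If $\sigma_{k-1}(z)\le0$, this is immediate since $\sigma_{k-2}(z)>0$ and $\sigma_{k-1}(\lambda)>0$.
\item Otherwise $z\in\Gamma_{k-1}^+$, and Newton's inequality gives $(n-2)\sigma_{k-1}(z)\le\frac{n-k}{k-1}\sigma_1(z)\sigma_{k-2}(z)$. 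One concludes with $\frac{n-k}{k-1}\le1$ and $S\ge\sigma_1(z)$. The latter holds because $x+y\ge0$, by the cone estimate and $2k\ge n+1$.
\end{itemize}
Hence $x=y$. Every critical point is therefore diagonal, which gives the inequality and its equality case directly.
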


\begin{proof}
The cone estimate of Guan--Viaclovsky--Wang \cite{GVW03},
expressed in Schouten eigenvalues, gives
\begin{equation}\label{eq:cone}
 \lambda_i\ge-\delta S,
 \qquad
 \delta=\frac{n-k}{n(k-1)}.
\end{equation}
Consequently,
\begin{equation}\label{eq:alpha}
 \mu_i\ge\alpha S,
 \qquad
 \alpha=
 \frac{(n-1)(2k-n)}{n(k-1)}>0.
\end{equation}
These nonstrict inequalities extend to
$\overline{\Gamma_k^+}$.

By homogeneity, it suffices to consider
\[
 \mathcal K
 =
 \left\{
 \lambda\in\overline{\Gamma_k^+}:
 \prod_{i=1}^n\mu_i=1
 \right\}.
\]
On this set all $\mu_i$ are positive. Since
$
 \sum_i\mu_i=2(n-1)S,
$
the arithmetic--geometric mean inequality and
\eqref{eq:alpha} give
\[
 \frac{n}{2(n-1)}
 \le S\le\frac1\alpha.
\]
Thus the $\mu_i$ are bounded above and bounded away from
zero. The linear map $\lambda\mapsto\mu$ is invertible, so
$\mathcal K$ is compact.

The diagonal vector
$\lambda_i=1/[2(n-1)]$ belongs to $\mathcal K$ and has
positive $\sigma_k$. Therefore a maximum of $\sigma_k$
on $\mathcal K$ lies in $\Gamma_k^+$.
Indeed, Newton's inequalities imply that $\sigma_k=0$
on the boundary of this cone.

Put $m=n-2$ and regard
\[
 F(\mu)=\sigma_k(\lambda(\mu)),
 \qquad
 \lambda_i
 =
 \frac1m
 \left(
 \mu_i-\frac{\sum_j\mu_j}{2(n-1)}
 \right).
\]
Writing
\[
 P=\sigma_{k-1}(\lambda),
 \qquad
 \beta=\frac{n-k+1}{2(n-1)},
\]
we have
\begin{equation}\label{eq:derivative}
 F_i
 =
 \frac1m
 \left(
 \sigma_{k-1}(\lambda|i)-\beta P
 \right).
\end{equation}
Here $\lambda|i$ denotes deletion of the $i$th coordinate.
At an interior maximum under the constraint
$\sum_i\log\mu_i=0$, the Lagrange multiplier equations give
\begin{equation}\label{eq:stationary}
 \mu_iF_i=\mu_jF_j
 \qquad\text{for all }i,j.
\end{equation}

Choose $j$ such that $y=\lambda_j=\max_i\lambda_i>0$,
and let $x=\lambda_i$ be any other coordinate.
By \eqref{eq:cone}, $S\le ny$, and $2k\ge n+1$,
\[
 x+y
 \ge
 \left(
 1-\frac{n-k}{k-1}
 \right)y
 \ge0.
\]
Let $z$ be the remaining $m$ coordinates, and set
\[
 T=\sigma_1(z),\qquad
 u=\sigma_{k-1}(z),\qquad
 v=\sigma_{k-2}(z)>0,\qquad
 c=m\beta.
\]
A direct calculation from \eqref{eq:derivative} yields
\begin{equation}\label{eq:pair-identity}
 \mu_iF_i-\mu_jF_j
 =
 \frac{x-y}{m}B,
 \qquad
 B=mu-Sv-cP.
\end{equation}

We claim that $B<0$.
If $u\le0$, this is immediate.
If $u>0$, then $z\in\Gamma_{k-1}^+$ and $T>0$.
Newton's inequalities give
\[
 mu\le\frac{n-k}{k-1}Tv.
\]
Since $(n-k)/(k-1)\le1$ and
$S=T+x+y\ge T$, we obtain
\[
 B
 \le
 \frac{n-k}{k-1}Tv-Sv-cP
 \le-cP<0.
\]
For $k=2$, the same argument uses $v=1$; when $u>0$,
the remaining vector has positive sum.

Equations \eqref{eq:stationary} and
\eqref{eq:pair-identity} now force $x=y$.
Since $i$ was arbitrary, every maximum is diagonal.
The product normalization then gives
\[
 \mu_i=1,\qquad
 \lambda_i=\frac1{2(n-1)}.
\]
Hence
\[
 \sigma_k(\lambda)
 \le
 \binom nk[2(n-1)]^{-k}
 \qquad\text{on }\mathcal K.
\]
Rescaling proves \eqref{eq:algebra}.
The argument also proves the asserted equality
characterization.
\end{proof}

Lemma~\ref{lem:algebra}
yields an integral volume inequality for arbitrary
$k$-admissible metrics.

\begin{proposition}\label{pro:integral-volume}
Let $(M^n,g)$ be a smooth, closed, connected Riemannian
manifold with $n\geq 3$ and $n/2<k\leq n$. Suppose that
$g$ is $k$-admissible, and write $d=|\pi_1(M)|$. Then
\[
    \operatorname{Vol}_g(M)^2
    \leq
    \frac{\omega_n}{d}
    \int_M
    \left(
        \frac{2^{-k}\binom{n}{k}}
             {\sigma_k(g^{-1}A_g)}
    \right)^{\frac{n}{2k}}
    \,dv_g.
\]
Equality holds if and only if $(M,g)$ is a spherical
space form of positive constant sectional curvature.
\end{proposition}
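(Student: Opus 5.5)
Pass to the universal cover, apply Kwong's inequality \eqref{eq:Kwong} there, and bound the integrand pointwise with Lemma~\ref{lem:algebra}.

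\emph{Covering step.} Since $k>n/2$, the estimate \eqref{eq:cone} shows that $g$ has positive Ricci curvature, so by Myers' theorem $\pi_1(M)$ is finite with order $d$. Let $\pi:(\tilde M,\tilde g)\to(M,g)$ be the universal cover. It is closed and connected, $\tilde g$ is still $k$-admissible, and
\[
\Vol_{\tilde g}(\tilde M)=d\,\Vol_g(M).
\]
Every pointwise quantity below is invariant under local isometries, so integrals over $\tilde M$ equal $d$ times the corresponding integrals over $M$.

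\emph{Pointwise step.} At each point let $\lambda=\lambda(g^{-1}A_g)\in\Gamma_k^+$ and put $S=\sigma_1(\lambda)$. From the definition of the Schouten tensor, $\Ric_g=(n-2)A_g+\operatorname{tr}(A_g)\,g$. Hence the eigenvalues of $g^{-1}\Ric_g$ are exactly $\mu_i=(n-2)\lambda_i+S$, and Lemma~\ref{lem:algebra} gives
\[
\det\nolimits_g\Bigl(\frac{\Ric_g}{n-1}\Bigr)^{1/n}
=\frac{(\prod_i\mu_i)^{1/n}}{n-1}
\ge 2\Bigl(\frac{\sigma_k}{\binom nk}\Bigr)^{1/k}
=\Bigl(\frac{\sigma_k}{\snk}\Bigr)^{1/k}.
\]
The last equality uses $\snk=2^{-k}\binom nk$. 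Raising to the power $-n/2$,
\[
\det\nolimits_g\Bigl(\frac{\Ric_g}{n-1}\Bigr)^{-1/2}
\le\Bigl(\frac{\snk}{\sigma_k(g)}\Bigr)^{n/(2k)}.
\]

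\emph{Integration.} Apply \eqref{eq:Kwong} to $(\tilde M,\tilde g)$ and insert the pointwise bound:
\[
d^2\Vol_g(M)^2\le\om\, d\int_M\Bigl(\frac{\snk}{\sigma_k}\Bigr)^{n/(2k)}dv_g .
\]
Dividing by $d^2$ yields the stated inequality.

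\emph{Equality case.} This is the main obstacle. If equality holds, both inequalities used must be equalities. Pointwise equality in Lemma~\ref{lem:algebra} forces all $\lambda_i$ to be equal at every point. Then $A_g$, and hence $\Ric_g$, is pure trace, so $g$ is Einstein, since $n\ge3$ and Schur's lemma applies. Equality must also hold in Kwong's inequality for $\tilde g$. We would then need the rigidity statement of \cite{Kwong}: equality occurs only for the round sphere. With constant Ricci curvature $\rho(n-1)$, the inequality \eqref{eq:Kwong} reduces to $\Vol\le \om\rho^{-n/2}$, which is Bishop's bound. Its equality case makes $\tilde g$ isometric to the round sphere of curvature $\rho$, so $(M,g)$ is a space form of constant curvature $\rho>0$. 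If Kwong's paper lacks an explicit rigidity clause, I would use exactly this Einstein reduction to Bishop--Gromov rigidity.

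For the converse, a quotient $\mathbb S^n(\rho)/\Gamma$ has constant $\sigma_k$. Both inequalities become equalities there, which we check directly: $\Vol=\om\rho^{-n/2}/d$, and the integrand is constant, equal to $\rho^{-n/2}$.
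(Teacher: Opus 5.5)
Your proof is correct and follows the paper's argument essentially step by step. You pass to the universal cover via Myers, bound $\det_g(\Ric_g/(n-1))^{-1/2}$ pointwise by Lemma~\ref{lem:algebra}, and apply Kwong's inequality on $\tilde M$; in the equality case you use the lemma's equality clause, Schur's lemma (the paper's contracted Bianchi step) and Bishop rigidity, as the paper does. The one point to make explicit is why the integrated inequality forces pointwise equality: the difference of the two integrands is continuous and nonnegative, so zero integral means it vanishes everywhere.
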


\begin{proof}[Proof of Proposition~\ref{pro:integral-volume}]
Set
\[
    V=\operatorname{Vol}_g(M),
    \qquad
    s_{n,k}=2^{-k}\binom{n}{k},
    \qquad
    D_g=\det\nolimits_g
    \left(\frac{\operatorname{Ric}_g}{n-1}\right).
\]
Since $g$ is $k$-admissible and $k>n/2$,
Lemma~\ref{lem:algebra} implies that
$\operatorname{Ric}_g>0$ and
\[
    D_g
    \geq
    \left(
        \frac{\sigma_k(g^{-1}A_g)}{s_{n,k}}
    \right)^{n/k}.
\]
Consequently,
\begin{equation}\label{eq:integral-pointwise}
    D_g^{-1/2}
    \leq
    \left(
        \frac{s_{n,k}}{\sigma_k(g^{-1}A_g)}
    \right)^{n/(2k)}.
\end{equation}

By compactness, the Ricci curvature has a uniform positive
lower bound. The Bonnet--Myers theorem therefore implies
that the universal covering
\[
    \pi:(\widetilde M,\widetilde g)\longrightarrow(M,g),
    \qquad \widetilde g=\pi^*g,
\]
is compact and has finite degree $d=|\pi_1(M)|$.
In particular,
\[
    \operatorname{Vol}_{\widetilde g}(\widetilde M)=dV,
    \qquad
    D_{\widetilde g}=D_g\circ\pi.
\]
Applying the volume comparison theorem
\cite[Theorem~10]{Kwong} to $(\widetilde M,\widetilde g)$
and then using \eqref{eq:integral-pointwise}, we obtain
\begin{align*}
    \frac{dV}{\omega_n}
    &\leq
    \frac{1}{dV}
    \int_{\widetilde M}
        D_{\widetilde g}^{-1/2}\,dv_{\widetilde g}\\
    &=
    \frac{1}{V}
    \int_M D_g^{-1/2}\,dv_g\\
    &\leq
    \frac{1}{V}
    \int_M
    \left(
        \frac{s_{n,k}}{\sigma_k(g^{-1}A_g)}
    \right)^{n/(2k)}
    \,dv_g.
\end{align*}
Multiplying by $\omega_n V/d$ proves the asserted inequality.

Suppose now that equality holds. Then equality holds in
the last integral inequality above. Since the difference
between its two integrands is continuous and nonnegative,
equality holds pointwise in \eqref{eq:integral-pointwise}.
The equality case of Lemma~\ref{lem:algebra} gives
$
    A_g=a\,g
$
for a smooth positive function $a$. Hence
$\operatorname{Ric}_g=2(n-1)a\,g$, and the contracted
Bianchi identity implies that $a$ is constant.
Writing $\kappa=2a>0$, we have
\[
    \operatorname{Ric}_g=(n-1)\kappa g,
    \qquad
    \sigma_k(g^{-1}A_g)=s_{n,k}\kappa^k.
\]
Equality in the asserted integral inequality now yields
$
    V=\frac{\omega_n}{d}\kappa^{-n/2}.
$
Thus
$
    \operatorname{Vol}_{\widetilde g}(\widetilde M)
    =\omega_n\kappa^{-n/2}.
$
The rigidity statement in Bishop's volume comparison
theorem shows that $(\widetilde M,\widetilde g)$ is
isometric to the round sphere of sectional curvature
$\kappa$. Therefore $(M,g)$ is a spherical space form.

Conversely, if $(M,g)$ is a spherical space form of
sectional curvature $\kappa>0$, then
\[
    \sigma_k(g^{-1}A_g)=s_{n,k}\kappa^k,
    \qquad
    V=\frac{\omega_n}{d}\kappa^{-n/2}.
\]
Substitution into the integral inequality gives equality.
\end{proof}
We next record the analytic fact needed to treat equality
of the conformal supremum.
Write
\[
 a_n=\frac{4(n-1)}{n-2},
 \qquad
 p=\frac{2n}{n-2},
\]
and normalize the Yamabe constant by
\[
 Y(M,[g_0])
 =
 \inf_{\psi\in C^\infty(M)\setminus\{0\}}
 \frac{
 \displaystyle
 \int_M
 \left(a_n|\nabla\psi|_{g_0}^2+R_{g_0}\psi^2\right)\,dv_{g_0}
 }{
 \|\psi\|_{L^p(g_0)}^2
 }.
\]

\begin{lemma}\label{lem:limit}
Let $g_j=e^{-2u_j}g_0$ be smooth metrics on a fixed closed
$n$-manifold, $n\ge3$, and suppose that $u_j\to u$ uniformly.
Assume
$
 R_{g_j}\ge n(n-1)
$
and
\[
 b_j
 :=
 \left(
 n-1-\lambda_{\min}(g_j^{-1}\Ric_{g_j})
 \right)_+
 \longrightarrow0
 \quad\text{in }L^1(g_0).
\]
Set $g_\infty=e^{-2u}g_0$ and
$V_\infty=\Vol_{g_\infty}(M)$. Then
\begin{equation}\label{eq:yamabe-limit}
 Y(M,[g_0])
 \ge n(n-1)V_\infty^{2/n}.
\end{equation}
\end{lemma}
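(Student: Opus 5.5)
The plan is a contradiction argument of Obata / Bidaut-V\'eron--V\'eron type. I use a smooth Yamabe minimizer in the fixed class $[g_0]$, rewrite it relative to each $g_j$, and show that the almost-Einstein-type lower bounds $R_{g_j}\ge n(n-1)$ and $\Ric_{g_j}\ge (n-1-b_j)g_j$, with $b_j\to0$ in $L^1$, force the sharp Ilias-type Sobolev inequality asymptotically. The key step, and the one I am least sure of, is showing that the error from $b_j$ goes to zero.

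First, the reduction. Since $u_j\to u$ uniformly, the metrics $g_j$ are uniformly equivalent to $g_0$. Their volume forms $e^{-nu_j}dv_{g_0}$ converge uniformly, so $\Vol_{g_j}(M)\to V_\infty$. The Yamabe quotient is conformally invariant. Concretely, for a test function $\psi$ the quotient with respect to $g_0$ equals the quotient with respect to $g_j$ of $\psi_j=e^{(n-2)u_j/2}\psi$. Hence it suffices to prove
\[
 Y(M,[g_0])\ \ge\ n(n-1)\Vol_{g_j}(M)^{2/n}-\varepsilon_j,\qquad \varepsilon_j\to0 .
\]
If $Y(M,[g_0])\ge Y(\mathbb S^n)=n(n-1)\om^{2/n}$ there is nothing to prove. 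Indeed, Bishop's theorem applied to $\Ric_{g_j}$ in integral form, or simply lower semicontinuity, should give $V_\infty\le\om$; I would check this using the $L^1$ Ricci control together with the uniform bound on the conformal factors. So assume $Y<Y(\mathbb S^n)$. By Aubin--Schoen there is a smooth positive minimizer $w$ with $h=w^{4/(n-2)}g_0$, $R_h\equiv Y$ and $\Vol_h=1$.

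Second, write $h=\varphi_j^{4/(n-2)}g_j$ with $\varphi_j=e^{(n-2)u_j/2}w$. Then $\varphi_j$ solves
\[
 a_n\Delta_{g_j}\varphi_j-R_{g_j}\varphi_j+Y\varphi_j^{p-1}=0 .
\]
I would run the Obata-type integral identity on $(M,g_j)$: multiply the Bochner formula for a suitable power $\varphi_j^{-\gamma}$ by weights, integrate, and use the trace-free Hessian term with a good sign. The pointwise hypotheses enter as follows. $R_{g_j}\ge n(n-1)$ handles the zeroth-order term. $\Ric_{g_j}\ge (n-1-b_j)g_j$ leaves an error
\[
 \int_M b_j\,\varphi_j^{\,q}\,|\nabla\varphi_j|_{g_j}^2\,dv_{g_j}
\]
for an appropriate exponent $q$. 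In the Ricci-bounded case this computation reproduces Ilias' inequality $Y\ge n(n-1)\Vol^{2/n}$. With the error kept, it should give $Y\ge n(n-1)\Vol_{g_j}(M)^{2/n}-C\int b_j\varphi_j^q|\nabla\varphi_j|^2$.

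Third, and this is the main obstacle: the error must tend to zero. The factor $b_j\to0$ only in $L^1$, and $\nabla_{g_j}\varphi_j$ involves $\nabla u_j$, which is not uniformly controlled. My first attempt is to move the weight onto $w$, which is smooth and fixed. I would redo the identity for $h$ expressed relative to $g_j$ so that only $|\nabla w|_{g_0}$ and bounded conformal factors appear, absorbing the $\nabla u_j$ terms through integration by parts against the scalar curvature equation. The error would then be bounded by $C\|w\|_{C^1}^2\|b_j\|_{L^1(g_0)}\to0$. If that fails, the fallback is a truncation argument: split $M$ into the set $\{b_j>\eta\}$, whose measure is small, and its complement. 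On the small set, use the crude bound $\Ric_{g_j}\ge -C$ coming from $R_{g_j}\ge n(n-1)$ together with the uniform equivalence of the metrics. On the complement, use the pointwise near-sharp Ricci bound. Then let $j\to\infty$ and $\eta\to0$ to obtain \eqref{eq:yamabe-limit}.
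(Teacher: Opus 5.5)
There is a genuine gap at the core of your plan: the Obata/Bidaut-V\'eron--V\'eron identity ``with an $L^1$-controlled error'' in your second and third steps is not available in the form you need.

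\emph{The identity.} The equation you feed into it, $a_n\Delta_{g_j}\varphi_j-R_{g_j}\varphi_j+Y\varphi_j^{p-1}=0$, has the variable coefficient $R_{g_j}$.
\begin{itemize}
\item Obata's identity needs an Einstein background. For non-Einstein $g_j$ it picks up a term pairing the traceless Ricci tensors of $h$ and $g_j$. That term has no sign, and a lower Ricci bound does not control the traceless Ricci of $g_j$.
\item The Bidaut-V\'eron--V\'eron computation needs a constant zeroth-order coefficient, because that term is integrated by parts. With $R_{g_j}$ in its place you pick up $\nabla R_{g_j}$, i.e.\ third derivatives of $u_j$, which nothing controls.
\item That method is a rigidity theorem for positive solutions of the constant-coefficient equation. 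It yields Ilias' inequality only when applied to subcritical minimizers with the exponent tending to $p$. It is not an inequality evaluated at a given critical point.
\end{itemize}
Even when $b_j\equiv0$, the way to get $Y\ge n(n-1)V^{2/n}$ is different. One uses $R_{g_j}\ge n(n-1)$ to pass to the constant-coefficient Sobolev quotient and applies Ilias to \emph{all} test functions. The equation of $\varphi_j$ never enters.

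\emph{The error term.} It contains $\int b_j|\nabla_0u_j|^2$, a product of two quantities each controlled only in $L^1$. Since $\Ric_{g_j}$ is known only through a pointwise lower bound, it cannot be integrated by parts to shift derivatives onto $w$. Your fallback invokes ``$\Ric_{g_j}\ge-C$ from $R_{g_j}\ge n(n-1)$''. This is false: a scalar lower bound gives no Ricci lower bound, and $b_j$ is not even assumed bounded. Even granting such a bound, the contribution of $\{b_j>\eta\}$ is small only if $|\nabla_0u_j|^2$ is uniformly integrable, which you never establish.

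\emph{The reduction step.} The claim $V_\infty\le\om$ ``by Bishop in integral form'' is also unavailable under a mere $L^1$ Ricci deficit. The step is unnecessary anyway, since the round class has a Yamabe minimizer too.

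The missing idea is to stop working along the smooth $g_j$ and pass to the limit, where an $L^1$ hypothesis on $b_j$ is exactly what is needed.
\begin{itemize}
\item From $R_{g_j}\ge n(n-1)$ one gets $\Delta_0u_j\ge\frac{n-2}{2}|\nabla_0u_j|^2-C$. Hence $\|\Delta_0u_j\|_{L^1}\le C_0$ and $\int_M|\nabla_0(u_j-u_\ell)|^2\le 2C_0\|u_j-u_\ell\|_\infty$, so $u_j\to u$ strongly in $W^{1,2}$.
\item Then $\Ric_{g_j}\to\Ric_{g_\infty}$ in distributions. Testing $\Ric_{g_j}\ge(n-1-b_j)g_j$ against fixed smooth, hence bounded, data gives $\Ric_{g_\infty}\ge(n-1)g_\infty$ distributionally, using only $b_j\to0$ in $L^1$.
\item Mondino--Ryborz upgrades this to $\RCD(n-1,n)$, and Nobili--Violo supplies the sharp Sobolev inequality on $(M,g_\infty)$.
\item Conformal covariance, the bound $R_{g_j}\ge n(n-1)$, and the $W^{1,2}$ convergence of $\psi e^{(n-2)u_j/2}$ then give the estimate for every $\psi$.
\end{itemize}
On this route $b_j$ is never paired with $\nabla u_j$.
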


\begin{proof}
All derivatives in the following conformal formulas are
taken with respect to $g_0$, and we write $dv_0=dv_{g_0}$.
Scalar positivity implies
\[
 \Delta_0u_j
 \ge
 \frac{n-2}{2}|\nabla_0u_j|^2
 -\frac{R_{g_0}}{2(n-1)}
 \ge-C.
\]
Since $M$ is closed and $\int_M\Delta_0u_j\,dv_0=0$,
we have
\[
 \|\Delta_0u_j\|_{L^1(g_0)}\le C_0.
\]
Integration by parts therefore gives
\[
 \int_M|\nabla_0(u_j-u_\ell)|^2\,dv_0
 \le
 2C_0\|u_j-u_\ell\|_\infty.
\]
Thus
\begin{equation}\label{eq:strong-convergence}
 u_j\longrightarrow u
 \quad\text{strongly in }W^{1,2}(g_0).
\end{equation}

The conformal Ricci formula is
\begin{align}
 \Ric_{g_j}
 &=
 \Ric_{g_0}
 +(n-2)\nabla_0^2u_j
 +(\Delta_0u_j)g_0 \notag\\ \notag
 &\quad
 +(n-2)
 \left(
 du_j\otimes du_j-|\nabla_0u_j|^2g_0
 \right).
\end{align}
The linear second derivative terms converge in
distributions, while the quadratic terms converge in
$L^1$ by \eqref{eq:strong-convergence}.
Hence $\Ric_{g_j}$ converges distributionally to the
Ricci tensor of
$g_\infty\in C^0\cap W^{1,2}$.

Moreover,
$
 \Ric_{g_j}\ge(n-1-b_j)g_j.
$
Testing against a fixed smooth vector field and a
nonnegative smooth function, uniform convergence of the
metrics and $b_j\to0$ in $L^1(g_0)$ imply
\begin{equation}\label{eq:distributional-Ricci}
 \Ric_{g_\infty}\ge(n-1)g_\infty
 \quad\text{in the sense of distributions}.
\end{equation}
The metric $g_\infty$ is continuous and uniformly equivalent
to $g_0$, with Christoffel symbols in $L^2$.
Completeness and the required volume growth bound follow
from compactness.
Theorem~7.2 and Remark~7.3 of Mondino--Ryborz \cite{MR}
therefore identify \eqref{eq:distributional-Ricci} with
the corresponding $\RCD(n-1,n)$ curvature bound.

The sharp Sobolev inequality in this setting,
with probability measure $dv_{g_\infty}/V_\infty$,
gives
\begin{equation}\label{eq:sharp-Sobolev}
 \begin{split}
 &a_n\int_M|\nabla f|_{g_\infty}^2\,dv_{g_\infty}
 +n(n-1)\int_M f^2\,dv_{g_\infty}\\
 &\qquad\ge
 n(n-1)V_\infty^{2/n}
 \|f\|_{L^p(g_\infty)}^2,
 \qquad f\in W^{1,2}(g_\infty).
 \end{split}
\end{equation}
See \cite[Theorem~1.8]{NV}.
The Sobolev energy here agrees with the energy computed
from the continuous metric tensor.

Fix $\psi\in C^\infty(M)$ and put
\[
 w_j=e^{-(n-2)u_j/2},
 \qquad
 f_j=\frac{\psi}{w_j},
 \qquad
 w=e^{-(n-2)u/2},
 \qquad
 f=\frac{\psi}{w}.
\]
The functions $w_j$ are uniformly positive and bounded.
Equation \eqref{eq:strong-convergence} implies
$f_j\to f$ strongly in $W^{1,2}(g_0)$.
Conformal covariance of the conformal Laplacian gives
\begin{align*}
 &\int_M
 \left(a_n|\nabla\psi|_{g_0}^2+R_{g_0}\psi^2\right)\,dv_0\\
 &\quad=
 a_n\int_M|\nabla f_j|_{g_j}^2\,dv_{g_j}
 +\int_M R_{g_j}f_j^2\,dv_{g_j}\\
 &\quad\ge
 a_n\int_M|\nabla f_j|_{g_j}^2\,dv_{g_j}
 +n(n-1)\int_M f_j^2\,dv_{g_j}.
\end{align*}
The two terms in the last line converge to their
$g_\infty$ counterparts.
Applying \eqref{eq:sharp-Sobolev} and observing that
\[
 \int_M|f|^p\,dv_{g_\infty}
 =
 \int_M|\psi|^p\,dv_0,
\]
we obtain
\[
 \int_M
 \left(a_n|\nabla\psi|_{g_0}^2+R_{g_0}\psi^2\right)\,dv_0
 \ge
 n(n-1)V_\infty^{2/n}
 \|\psi\|_{L^p(g_0)}^2.
\]
Taking the infimum over $\psi$ proves
\eqref{eq:yamabe-limit}.
\end{proof}

\begin{proof}[Proof of Theorem~\ref{thm:main}]
Let $h$ belong to the defining set of
\eqref{eq:definition}.
The Ricci eigenvalues of $h$ are
\[
 \mu_i=(n-2)\lambda_i+\sum_j\lambda_j,
\]
where $\lambda_i$ are its Schouten eigenvalues.
Lemma~\ref{lem:algebra} implies
\begin{equation}\label{eq:determinant-lower}
 D_h
 :=
 \det\nolimits_h
 \left(\frac{\Ric_h}{n-1}\right)
 \ge
 \left(
 \frac{\sigma_k(h)}{\snk}
 \right)^{n/k}
 \ge1.
\end{equation}
Using \eqref{eq:Kwong}, we conclude that
$ \Vol_h(M)\le\om.$

If $\Vol_h(M)=\om$, then the nonnegative smooth function
$1-D_h^{-1/2}$ has zero integral.
Thus $D_h\equiv1$.
Equality holds throughout \eqref{eq:determinant-lower},
so Lemma~\ref{lem:algebra} gives
\[
 A_h=\frac12h,
 \qquad
 \Ric_h=(n-1)h.
\]
Bishop's volume rigidity then shows that $(M,h)$ is
the unit round sphere.

The cone estimate and Newton's inequalities also give
\begin{equation}\label{eq:uniform-curvature}
 R_h\ge n(n-1),
 \qquad
 \Ric_h\ge c_{n,k}h,
 \qquad
 c_{n,k}
 =
 \frac{(n-1)(2k-n)}{2(k-1)}>0.
\end{equation}
Myers' theorem implies that the universal cover is compact
and $d:=|\pi_1(M)|<\infty$.
Applying the preceding volume estimate to the lifted
metric yields
$
 d\,\Vol_h(M)\le\om.
$
Taking the supremum proves \eqref{eq:main-bound}.
Equality for an individual metric occurs precisely when
its universal cover is the unit round sphere, proving
the metric equality statement.

It remains to prove equality rigidity for $\Lambda_k$.
The round conformal class contains $\grd$, so its maximal
conformal volume equals $\om$.
Conversely, suppose that $[g_0]$ is not the spherical
conformal class and that
$
 \Lambda_k(M,[g_0])=\om.
$
Choose smooth metrics $g_j=e^{-2u_j}g_0$ in the defining
set such that
\[
 V_j:=\Vol_{g_j}(M)\longrightarrow\om.
\]

Theorem~A of Trudinger--Wang \cite{TW09} gives
$C^0$ precompactness of admissible conformal metrics
after fixing the volume.
To apply it, first rescale $g_j$ to a common volume;
the rescaling constants converge because $V_j\to\om$.
Consequently, after passing to a subsequence,
\[
 u_j\longrightarrow u
 \quad\text{uniformly},
 \qquad
 \Vol_{e^{-2u}g_0}(M)=\om.
\]
In particular, $dv_{g_j}$ and $dv_{g_0}$ are uniformly
equivalent.

Set $D_j=D_{g_j}$.
By \eqref{eq:Kwong} and \eqref{eq:determinant-lower},
\[
 0
 \le
 \frac1{V_j}\int_M(1-D_j^{-1/2})\,dv_{g_j}
 \le
 1-\frac{V_j}{\om}
 \longrightarrow0.
\]
It follows that $D_j\to1$ in measure.

We claim that all Ricci eigenvalues converge to $n-1$
in measure.
Indeed, \eqref{eq:uniform-curvature} bounds them below
by $c_{n,k}>0$.
On a bounded sublevel of $D_j$, their product is bounded,
so the spectra lie in a compact set.
Within this set, the only spectrum satisfying
$D=1$ and $\sigma_k\ge\snk$ is
\[
 (n-1,\ldots,n-1),
\]
by Lemma~\ref{lem:algebra}.
The limiting Schouten vector remains in $\Gamma_k^+$
because Newton's inequalities give positive lower bounds
for all $\sigma_\ell$, $1\le\ell\le k$.
This proves the claim by compactness.

Therefore
\[
 b_j
 =
 \left(
 n-1-\lambda_{\min}(g_j^{-1}\Ric_{g_j})
 \right)_+
 \longrightarrow0
 \quad\text{in measure}.
\]
Since $0\le b_j\le n-1$, the convergence holds in
$L^1(g_0)$.
Together with $R_{g_j}\ge n(n-1)$,
Lemma~\ref{lem:limit} gives
\[
 Y(M,[g_0])
 \ge
 n(n-1)\om^{2/n}
 =
 Y(\mathbb S^n,[\grd]).
\]
The Aubin--Schoen theorem asserts that the reverse
inequality always holds, with equality only for the
spherical conformal class; see \cite{LP87}.
This contradicts our assumption on $[g_0]$ and proves
\eqref{eq:main-rigidity}.
\end{proof}


\begin{thebibliography}{99}

\bibitem{GV04}
M.~J.~Gursky and J.~A.~Viaclovsky,
\emph{Volume comparison and the $\sigma_k$-Yamabe problem},
Adv. Math. \textbf{187} (2004), 447--487.


\bibitem{GV07}
M.~J.~Gursky and J.~A.~Viaclovsky,
\emph{Prescribing symmetric functions of the eigenvalues
of the Ricci tensor},
Ann. of Math. (2) \textbf{166} (2007), 475--531.


\bibitem{GVW03}
P.~Guan, J.~Viaclovsky and G.~Wang,
\emph{Some properties of the Schouten tensor and
applications to conformal geometry},
Trans. Amer. Math. Soc. \textbf{355} (2003), 925--933.

\bibitem{Kwong}
K.-K.~Kwong,
\emph{An improved volume bound under Ricci and scalar
curvature lower bounds},
preprint, 2026, version~2, Theorem~10.
\href{https://arxiv.org/abs/2608.19196v2}
{arXiv:2608.19196v2}.

\bibitem{LP87}
J.~M.~Lee and T.~H.~Parker,
\emph{The Yamabe problem},
Bull. Amer. Math. Soc. (N.S.) \textbf{17} (1987), 37--91.


\bibitem{MR}
A.~Mondino and V.~Ryborz,
\emph{On the equivalence of distributional and synthetic
Ricci curvature lower bounds},
J. Funct. Anal. \textbf{289} (2025), article 111035.


\bibitem{NV}
F.~Nobili and I.~Y.~Violo,
\emph{Rigidity and almost rigidity of Sobolev inequalities
on compact spaces with lower Ricci curvature bounds},
Calc. Var. Partial Differential Equations
\textbf{61} (2022), article 180.

\bibitem{TW09}
N.~S.~Trudinger and X.-J.~Wang,
\emph{On Harnack inequalities and singularities of
admissible metrics in the Yamabe problem},
Calc. Var. Partial Differential Equations
\textbf{35} (2009), 317--338.


\end{thebibliography}
\end{document}